
\documentclass[11pt]{article}    

%
\usepackage[margin=0.75in]{geometry} 
\usepackage{amsmath,amssymb,amsthm}

%
%

%
%
\newtheorem{theorem}{Theorem}[section]
\newtheorem{proposition}[theorem]{Proposition}
\newtheorem{lemma}[theorem]{Lemma}

\newtheorem{remark}{Remark}

\newcommand{\al}{\alpha}
\newcommand{\bt}{\beta}
\newcommand{\la}{\lambda}

\newcommand{\s}{\sigma}

\newcommand{\be}{\begin{equation}}
\newcommand{\ee}{\end{equation}}
\newcommand{\bea}{\begin{eqnarray}}
\newcommand{\eea}{\end{eqnarray}}
\newcommand{\ba}{\begin{align}}
\newcommand{\ea}{\end{align}}
\newcommand{\no}{\nonumber}

\numberwithin{equation}{section}
\linespread{1.6} 

\begin{document}

\title{Orthogonal Polynomials with a Singularly Perturbed Airy Weight}
\author{Chao Min\thanks{School of Mathematical Sciences, Huaqiao University, Quanzhou 362021, China; Email: chaomin@hqu.edu.cn}\: and Yuan Cheng\thanks{School of Mathematical Sciences, Huaqiao University, Quanzhou 362021, China}}


\date{March 13, 2024}
\maketitle
\begin{abstract}
We study the monic orthogonal polynomials with respect to a singularly perturbed Airy weight. By using Chen and Ismail's ladder operator approach, we derive a discrete system satisfied by the recurrence coefficients for the orthogonal polynomials. We find that the orthogonal polynomials satisfy a second-order linear ordinary differential equation, whose coefficients are all expressed in terms of the recurrence coefficients. By considering the time evolution, we obtain a system of differential-difference equations satisfied by the recurrence coefficients. Finally, we study the asymptotics of the recurrence coefficients when the degrees of the orthogonal polynomials tend to infinity.
\end{abstract}

$\mathbf{Keywords}$: Orthogonal polynomials; Singularly perturbed Airy weight; Ladder operators;

Recurrence coefficients; Differential and difference equations; Asymptotics.

$\mathbf{Mathematics\:\: Subject\:\: Classification\:\: 2020}$: 33C45, 42C05.

\section{Introduction}
As is well-known, classical orthogonal polynomials (e.g., Hermite, Laguerre and Jacobi polynomials) are orthogonal with respect to a weight function $w(x)$ on the real line which satisfies the Pearson equation
\be\label{pe}
\frac{d}{dx}(\sigma(x)w(x))=\tau(x)w(x),
\ee
where $\sigma(x)$ is a polynomial of degree $\leq 2$ and $\tau(x)$ is a polynomial of degree 1.
Semi-classical orthogonal polynomials have a weight function $w(x)$ that satisfies the Pearson equation (\ref{pe}) where $\sigma(x)$ and $\tau(x)$ are polynomials with deg $\sigma>2$ or deg $\tau\neq 1$. See, e.g., \cite[Section 1.1.1]{VanAssche}.

Various semi-classical orthogonal polynomials have been studied during the past few years. For example, very recently, Clarkson and Jordaan \cite{CJ} considered the orthogonal polynomials with respect to the so-called generalized Airy weight
$$
w(x)={x}^\lambda\mathrm{e}^{-\frac{1}{3}x^3+tx},\qquad x\in \mathbb{R}^{+}
$$
with parameters $\lambda> -1$ and $t\in\mathbb{R}$. They derived the differential and difference equations satisfied by the orthogonal polynomials and also by the recurrence coefficients, and investigated various asymptotic properties of the recurrence coefficients.
Orthogonal polynomials associated with the exponential cubic weight have also been studied in e.g. \cite{Magnus,MS,VFZ}, and have important applications in numerical analysis \cite{DHK} and random matrix theory \cite{BD,BD1,BDY}.

In the present paper, we are concerned with the monic orthogonal polynomials with respect to the singularly perturbed Airy weight
\be\label{weight}
w(x;t)={x}^\lambda\mathrm{e}^{-x^3-\frac{t}{x}},\qquad x\in \mathbb{R}^{+}
\ee
with parameters $\lambda> -1$ and $t>0$. The weight (\ref{weight}) is a semi-classical weight since it satisfies the Pearson equation (\ref{pe}) with
$$
\s(x)=x^2,\qquad\qquad \tau(x)=-3 x^4+(\lambda +2) x+t.
$$
Note that the factor $\mathrm{e}^{-\frac{t}{x}}$ induces an infinitely strong zero at the origin for the weight (\ref{weight}).

Semi-classical orthogonal polynomials with singularly perturbed Gaussian, Laguerre, Jacobi and Freud weights have been studied in \cite{BMM,ChenDai,ChenIts,MCC,MLC,MW,Xu2015}.
The weights with an essential singularity at the origin play an important role in many mathematical and physical problems, such as the study of statistics for zeros of the Riemann zeta function \cite{Berry}, the calculation of finite temperature expectation values in integrable quantum field theory \cite{Lukyanov}, the study of the Wigner time-delay distribution \cite{Brouwer,MS2013,Texier}, etc.

Let $P_{n}(x;t),\; n=0,1,2,\ldots$, be the monic polynomials of degree $n$ orthogonal with respect to the weight (\ref{weight}), such that
$$
\int_{0}^{\infty}P_{m}(x;t)P_{n}(x;t)w(x;t)dx=h_{n}(t)\delta_{mn},\qquad m, n=0,1,2,\ldots,
$$
where $h_n(t)>0$ and $\delta_{mn}$ denotes the Kronecker delta.
Here $P_{n}(x;t)$ has the following expansion
$$
P_{n}(x;t)=x^{n}+\mathrm{p}(n,t)x^{n-1}+\cdots+P_n(0;t),\qquad n=0,1,2,\ldots,
$$
where $\mathrm{p}(n,t)$ denotes the sub-leading coefficient of $P_{n}(x;t)$ with the initial value $\mathrm{p}(0,t)=0$.

One of the most important characteristics of orthogonal polynomials is the fact that they obey the three-term recurrence relation of the form \cite{Chihara,Szego}
\be\label{rr}
xP_{n}(x;t)=P_{n+1}(x;t)+\al_n(t)P_n(x;t)+\beta_{n}(t)P_{n-1}(x;t),
\ee
with the initial conditions
$$
P_{0}(x;t)=1,\qquad \beta_{0}(t)P_{-1}(x;t)=0.
$$
It can be seen that the recurrence coefficients $\al_n(t)$ and $\bt_n(t)$ have the following integral representations:
$$
\al_n(t)=\frac{1}{h_n(t)}\int_{0}^{\infty}xP_n^2(x;t)w(x;t)dx>0,
$$
\be\label{ir2}
\bt_n(t)=\frac{1}{h_{n-1}(t)}\int_{0}^{\infty}xP_n(x;t)P_{n-1}(x;t)w(x;t)dx.
\ee
Obviously, the expression (\ref{ir2}) is equivalent to
\be\label{be2}
\beta_{n}(t)=\frac{h_{n}(t)}{h_{n-1}(t)}>0.
\ee

Moreover, we have by comparing the coefficients of $x^n$ on both sides of (\ref{rr}) that
\be\label{be1}
\al_{n}(t)=\mathrm{p}(n,t)-\mathrm{p}(n+1,t).
\ee
Taking a telescopic sum of (\ref{be1}), we find
\be\label{sum}
\sum_{j=0}^{n-1}\al_{j}(t)=-\mathrm{p}(n,t).
\ee
As an easy consequence of the three-term recurrence relation (\ref{rr}), we have the Christoffel-Darboux formula
$$
\sum_{j=0}^{n-1}\frac{P_j(x)P_j(y)}{h_j(t)}=\frac{P_n(x)P_{n-1}(y)-P_n(y)P_{n-1}(x)}{h_{n-1}(t)(x-y)},
$$
which plays an important role in the derivation of the ladder operators introduced in the next section.

It is well known that the orthogonal polynomials can be expressed as the determinants \cite[(2.1.6)]{Ismail},
$$
P_n(x;t)=\frac{1}{D_n(t)}\begin{vmatrix}
	\mu_{0}(t)&\mu_{1}(t)&\cdots&\mu_{n}(t)\\
	\mu_{1}(t)&\mu_{2}(t)&\cdots&\mu_{n+1}(t)\\
	\vdots&\vdots&&\vdots\\
	\mu_{n-1}(t)&\mu_{n}(t)&\cdots&\mu_{2n-1}(t)\\
	1&x&\cdots&x^n
\end{vmatrix},
$$
where $D_n(t)$ is the Hankel determinant for the weight (\ref{weight}) defined by
$$
D_{n}(t):=\det(\mu_{i+j}(t))_{i,j=0}^{n-1}=\begin{vmatrix}
	\mu_{0}(t)&\mu_{1}(t)&\cdots&\mu_{n-1}(t)\\
	\mu_{1}(t)&\mu_{2}(t)&\cdots&\mu_{n}(t)\\
	\vdots&\vdots&&\vdots\\
	\mu_{n-1}(t)&\mu_{n}(t)&\cdots&\mu_{2n-2}(t)
\end{vmatrix}
$$
and $\mu_{j}(t)$ is the $j$th moment given by
$$
\mu_{j}(t):=\int_{0}^{\infty}x^{j}w(x;t)dx.
$$
An evaluation of the above integral shows that the moment $\mu_{j}(t)$ can be expressed in terms of the generalized hypergeometric functions.

Furthermore, the Hankel determinant $D_n(t)$ can be expressed as a product of $h_j(t)$ \cite[(2.1.6)]{Ismail},
\be\label{hankel}
D_{n}(t)=\prod_{j=0}^{n-1}h_{j}(t).
\ee
From (\ref{be2}) and (\ref{hankel}), it is easy to see that the recurrence coefficient $\bt_n(t)$ and the Hankel determinant $D_n(t)$ have the following relation:
$$
\bt_n(t)=\frac{D_{n+1}(t)D_{n-1}(t)}{D_n^2(t)}.
$$

The remainder of this paper is organized as follows. In Section 2, we apply the ladder operators and associated compatibility conditions to orthogonal polynomials with the singularly perturbed Airy weight. Based on the identities for the recurrence coefficients and auxiliary quantities, we derive the discrete system satisfied by the recurrence coefficients. We also obtain the second-order differential equation for the orthogonal polynomials. In Section 3, we study the time evolution and find that the recurrence coefficients satisfy the coupled differential-difference equations. The relation between the logarithmic derivative of the Hankel determinant and the recurrence coefficients has also been discussed. In Section 4, we consider the large $n$ asymptotics of the recurrence coefficients by using Dyson's Coulomb fluid approach. Finally, the conclusions are outlined in Section 5.




\section{Ladder operators and the associated compatibility conditions}
Ladder operators for orthogonal polynomials were known to many authors before (one can even go back to Laguerre), but mostly these were obtained case by case. Chen and Ismail \cite{ChenIsmail2} found a general setting for ladder operators which contains all the earlier known cases; see also Ismail \cite[Chapter 3]{Ismail} and Van Assche \cite[Chapter 4]{VanAssche}. The ladder operator approach has been demonstrated to be very useful to analyze the recurrence coefficients of various orthogonal polynomials;
see, e.g., \cite{BCE,ChenDai,ChenIts,CJ,Dai,Filipuk,Min2023,MLC}. The lowering and raising ladder operators for our monic orthogonal polynomials are given by
\be\label{lowering}
\left(\frac{d}{dx}+B_{n}(x)\right)P_{n}(x)=\beta_{n}A_{n}(x)P_{n-1}(x),
\ee
\be\label{raising}
\left(\frac{d}{dx}-B_{n}(x)-\mathrm{v}'(x)\right)P_{n-1}(x)=-A_{n-1}(x)P_{n}(x),
\ee
where $\mathrm{v}(x):=-\ln w(x)$ is the potential and
\be\label{an}
A_{n}(x):=\frac{1}{h_{n}}\int_{0}^{\infty}\frac{\mathrm{v}'(x)-\mathrm{v}'(y)}{x-y}P_{n}^{2}(y)w(y)dy,
\ee
\be\label{bn}
B_{n}(x):=\frac{1}{h_{n-1}}\int_{0}^{\infty}\frac{\mathrm{v}'(x)-\mathrm{v}'(y)}{x-y}P_{n}(y)P_{n-1}(y)w(y)dy.
\ee
Note that we often suppress the $t$-dependence for brevity, and we have $w(0)=w(\infty)=0$.

The functions $A_n(x)$ and $B_n(x)$ are not independent but must satisfy the following compatibility conditions:
\be
B_{n+1}(x)+B_{n}(x)=(x-\al_n) A_{n}(x)-\mathrm{v}'(x), \tag{$S_{1}$}
\ee
\be
1+(x-\al_n)(B_{n+1}(x)-B_{n}(x))=\beta_{n+1}A_{n+1}(x)-\beta_{n}A_{n-1}(x), \tag{$S_{2}$}
\ee
\be
B_{n}^{2}(x)+\mathrm{v}'(x)B_{n}(x)+\sum_{j=0}^{n-1}A_{j}(x)=\beta_{n}A_{n}(x)A_{n-1}(x). \tag{$S_{2}'$}
\ee
The conditions ($S_{1}$) and ($S_{2}$) are essentially a consequence of the three-term recurrence relation (\ref{rr}).
Equation ($S_{2}'$) is obtained from the suitable combination of ($S_{1}$) and ($S_{2}$) and usually gives a better insight into the recurrence coefficients compared with ($S_{2}$) in practice.


For our weight (\ref{weight}), we have
\be\label{vx}
\mathrm{v}(x)=-\ln w(x)=x^3-\lambda\ln x+\frac{t}{x}.
\ee
It follows that
\be\label{vpx}
\mathrm{v}'(x)=3x^2-\frac{\lambda}{x}-\frac{t}{x^2}
\ee
and
\be\label{vp}
\frac{\mathrm{v}'(x)-\mathrm{v}'(y)}{x-y}
=3(x+y)+\frac{\lambda}{xy}+\frac{t}{xy^2}+\frac{t}{x^2y}.
\ee

Substituting (\ref{vp}) into the definition of $A_n(x)$ in (\ref{an}), we find
\begin{align}
A_{n}(x)=&\:\frac{1}{h_{n}}\int_{0}^{\infty}\left[3(x+y)+\frac{\lambda}{xy}+\frac{t}{xy^2}+\frac{t}{x^2y}\right]P_{n}^{2}(y)w(y)dy\nonumber\\[8pt]
=&\:3x+3\al_{n}+\frac{1}{x}\left(\frac{\lambda}{h_{n}}\int_{0}^{\infty}\frac{1}{y}P_{n}^{2}(y)w(y)dy+\frac{t}{h_{n}}\int_{0}^{\infty}\frac{1}{y^2}P_{n}^{2}(y)w(y)dy\right)\no\\[8pt]
&+\frac{t}{x^2h_{n}}\int_{0}^{\infty}\frac{1}{y}P_{n}^{2}(y)w(y)dy.\label{an1}
\end{align}
The formula in the brackets can be simplified through integration by parts. In fact, we have
\begin{align}
\frac{\lambda}{h_{n}}\int_{0}^{\infty}\frac{1}{y}P_{n}^{2}(y)w(y)dy&=\frac{\lambda}{h_{n}}\int_{0}^{\infty}P_{n}^{2}(y)y^{\lambda-1}\mathrm{e}^{-y^3-\frac{t}{y}}dy
=\frac{1}{h_{n}}\int_{0}^{\infty}P_{n}^{2}(y)\mathrm{e}^{-y^3-\frac{t}{y}}dy^\lambda\nonumber\\[8pt]
&=-\frac{1}{h_{n}}\int_{0}^{\infty}P_{n}^{2}(y)w(y)\left(-3y^2+\frac{t}{y^2}\right)dy\nonumber\\[8pt]
&=\frac{3}{h_{n}}\int_{0}^{\infty}y^2P_{n}^{2}(y)w(y)dy-\frac{t}{h_{n}}\int_{0}^{\infty}\frac{1}{y^2}P_{n}^{2}(y)w(y)dy\nonumber\\[8pt]
&=3\left(\al_{n}^2+\beta_{n}+\beta_{n+1}\right)-\frac{t}{h_{n}}\int_{0}^{\infty}\frac{1}{y^2}P_{n}^{2}(y)w(y)dy,\nonumber
\end{align}
where use has been made of the three-term recurrence relation (\ref{rr}) in the last step. It follows that
$$
\frac{\lambda}{h_{n}}\int_{0}^{\infty}\frac{1}{y}P_{n}^{2}(y)w(y)dy+\frac{t}{h_{n}}\int_{0}^{\infty}\frac{1}{y^2}P_{n}^{2}(y)w(y)dy=3\left(\al_{n}^2+\beta_{n}+\beta_{n+1}\right).
$$
Hence, we obtain from (\ref{an1}) that
$$
A_n(x)=3x+3\al_{n}+\frac{3\left(\al_{n}^2+\beta_{n}+\beta_{n+1}\right)}{x}+\frac{t}{x^2h_{n}}\int_{0}^{\infty}\frac{1}{y}P_{n}^{2}(y)w(y)dy.
$$

Similarly, plugging (\ref{vp}) into the definition of $B_n(x)$ in (\ref{bn}) gives
\begin{align}
B_{n}(x)=&\:\frac{1}{h_{n-1}}\int_{0}^{\infty}\left[3(x+y)+\frac{\lambda}{xy}+\frac{t}{xy^2}+\frac{t}{x^2y}\right]P_{n}(y)P_{n-1}(y)w(y)dy\nonumber\\[8pt]
=&\:3\beta_{n}+\frac{1}{x}\left(\frac{\lambda}{h_{n-1}}\int_{0}^{\infty}\frac{1}{y}P_{n}(y)P_{n-1}(y)w(y)dy
+\frac{t}{h_{n-1}}\int_{0}^{\infty}\frac{1}{y^2}P_{n}(y)P_{n-1}(y)w(y)dy\right)\no\\[8pt]
&+\frac{t}{x^2h_{n-1}}\int_{0}^{\infty}\frac{1}{y}P_{n}(y)P_{n-1}(y)w(y)dy.\label{bn1}
\end{align}
Using integration by parts, we find
$$
\frac{\lambda}{h_{n-1}}\int_{0}^{\infty}\frac{1}{y}P_{n}(y)P_{n-1}(y)w(y)dy=3(\al_n+\al_{n-1})\bt_n-n-\frac{t}{h_{n-1}}\int_{0}^{\infty}\frac{1}{y^2}P_{n}(y)P_{n-1}(y)w(y)dy.
$$
That is,
$$
\frac{\lambda}{h_{n-1}}\int_{0}^{\infty}\frac{1}{y}P_{n}(y)P_{n-1}(y)w(y)dy
+\frac{t}{h_{n-1}}\int_{0}^{\infty}\frac{1}{y^2}P_{n}(y)P_{n-1}(y)w(y)dy=3(\al_n+\al_{n-1})\bt_n-n.
$$
Then we obtain the expression of $B_n(x)$ from (\ref{bn1}) that
$$
B_{n}(x)=3\beta_{n}+\frac{3(\al_n+\al_{n-1})\bt_n-n}{x}+\frac{t}{x^2h_{n-1}}\int_{0}^{\infty}\frac{1}{y}P_{n}(y)P_{n-1}(y)w(y)dy.
$$

We summarize the above results in the following lemma.
\begin{lemma}\label{le}
We have
\be\label{anz}
A_n(x)=3x+3\al_{n}+\frac{R_{n}}{x}+\frac{R_n^*}{x^2},
\ee
\be\label{bnz}
B_{n}(x)=3\beta_{n}+\frac{r_n}{x}+\frac{r_n^*}{x^2},
\ee
where $R_{n},\; r_n$ and $R_{n}^*,\; r_{n}^*$ are the auxiliary quantities defined by
\be\label{Rn}
R_{n}:=3\left(\al_{n}^2+\beta_{n}+\beta_{n+1}\right),
\ee
\be\label{rn}
r_n:=3(\al_n+\al_{n-1})\bt_n-n,
\ee
and
\be\label{Rns}
R_{n}^*:=\frac{t}{h_{n}}\int_{0}^{\infty}\frac{1}{y}P_{n}^{2}(y)w(y)dy,
\ee
\be\label{rns}
r_{n}^*:=\frac{t}{h_{n-1}}\int_{0}^{\infty}\frac{1}{y}P_{n}(y)P_{n-1}(y)w(y)dy.
\ee
\end{lemma}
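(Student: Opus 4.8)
The plan is to evaluate the defining integrals (\ref{an}) and (\ref{bn}) by substituting the divided difference (\ref{vp}) and then reducing every moment that appears to the recurrence coefficients. Inserting (\ref{vp}) into (\ref{an}) splits $A_n(x)$ into a contribution from $3(x+y)$ and three singular contributions carrying $x^{-1}$ and $x^{-2}$. The $3(x+y)$ piece is immediate from orthogonality, since $\frac{1}{h_n}\int_0^\infty P_n^2 w\,dy=1$ and $\frac{1}{h_n}\int_0^\infty yP_n^2 w\,dy=\alpha_n$, giving $3x+3\alpha_n$. The coefficient of $x^{-2}$ is literally $\frac{t}{h_n}\int_0^\infty y^{-1}P_n^2 w\,dy$, which I would name $R_n^*$ as in (\ref{Rns}); the coefficient of $x^{-1}$ combines the two negative moments $\frac{\lambda}{h_n}\int y^{-1}P_n^2 w$ and $\frac{t}{h_n}\int y^{-2}P_n^2 w$.

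The crux is disposing of these two a priori unknown negative moments. Here I would use $\lambda y^{\lambda-1}\,dy=d(y^\lambda)$ to integrate by parts, observing that $y^\lambda e^{-y^3-t/y}=w(y)$, so the boundary terms vanish precisely because $w(0)=w(\infty)=0$. Differentiating $P_n^2 e^{-y^3-t/y}$ produces a term $2P_nP_n'$ that integrates to zero by orthogonality (as $\deg P_n'<n$), together with the exponential-derivative factor $-3y^2+t/y^2$. This rewrites $\frac{\lambda}{h_n}\int y^{-1}P_n^2 w$ as $\frac{3}{h_n}\int y^2 P_n^2 w-\frac{t}{h_n}\int y^{-2}P_n^2 w$, so adding $\frac{t}{h_n}\int y^{-2}P_n^2 w$ cancels both negative moments and leaves $3\cdot\frac{1}{h_n}\int y^2 P_n^2 w$. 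Two applications of the three-term recurrence (\ref{rr}) give $\frac{1}{h_n}\int y^2 P_n^2 w=\alpha_n^2+\beta_n+\beta_{n+1}$, so the coefficient of $x^{-1}$ is exactly $R_n$ of (\ref{Rn}), proving (\ref{anz}).

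The expression (\ref{bnz}) for $B_n(x)$ follows the identical template with $P_n^2$ replaced by $P_nP_{n-1}$. Now the $3(x+y)$ piece contributes $3\beta_n$ (the $x$-term vanishing by orthogonality and $\frac{1}{h_{n-1}}\int yP_nP_{n-1}w=\beta_n$), and the $x^{-2}$ coefficient is $r_n^*$ of (\ref{rns}). For the $x^{-1}$ coefficient I would integrate by parts against $d(y^\lambda)$ once more; differentiating $P_nP_{n-1}$ yields $P_n'P_{n-1}+P_nP_{n-1}'$, of which $P_nP_{n-1}'$ dies by orthogonality while $P_n'P_{n-1}$ survives and contributes $-n$ via $P_n'=nP_{n-1}+\cdots$. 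After the negative moments cancel as before, the remaining moment $\frac{3}{h_{n-1}}\int y^2 P_nP_{n-1}w$ is evaluated by the recurrence to $3(\alpha_n+\alpha_{n-1})\beta_n$, so the coefficient of $x^{-1}$ equals $r_n$ of (\ref{rn}).

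I expect the integration-by-parts step to be the only real obstacle: one must recognise that the intractable combination $\lambda\int y^{-1}(\cdot)+t\int y^{-2}(\cdot)$ is exactly what $d(y^\lambda)$ together with the derivative of $e^{-y^3-t/y}$ generates, so that the two negative moments telescope into a single positive moment accessible to the recurrence. The two supporting checks---that the boundary terms vanish thanks to $w(0)=w(\infty)=0$, and that the polynomial-derivative terms drop by orthogonality---are what make the reduction clean, and the surviving $y^{-1}$-weighted integrals are then simply abbreviated as the auxiliary quantities $R_n^*$ and $r_n^*$.
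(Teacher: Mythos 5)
Your proposal is correct and follows essentially the same route as the paper: substitute the divided difference (\ref{vp}) into (\ref{an})--(\ref{bn}), evaluate the $3(x+y)$ part by orthogonality and the recurrence, and integrate by parts against $dy^\lambda$ so that the two negative moments cancel, leaving $R_n$ and $r_n$ as the $x^{-1}$ coefficients while the surviving $y^{-1}$-weighted integrals are abbreviated as $R_n^*$ and $r_n^*$. All the individual steps (vanishing boundary terms, the $-n$ from $P_n'P_{n-1}$, the recurrence evaluation of the second moments) match the paper's computation.
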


Substituting the expressions of $A_n(x)$ and $B_n(x)$ in (\ref{anz}) and (\ref{bnz}) into ($S_{1}$) and comparing the coefficients of $\frac{1}{x^2}$ and $\frac{1}{x}$ on both sides,  we get
\be\label{re1}
r_{n}^*+r_{n+1}^*=t-\alpha_{n}R_{n}^*
\ee
and
\be\label{re3}
r_n+r_{n+1}=R_{n}^*-\al_{n}R_n+\lambda,
\ee
respectively.

Similarly, substituting (\ref{anz}) and (\ref{bnz}) into ($S_{2}'$) and comparing the coefficients of $\frac{1}{x^4}, \frac{1}{x^3}, \frac{1}{x^2}, \frac{1}{x}$ and $x^0$ on both sides, we obtain
\be\label{re2}
r_{n}^*\left(r_{n}^*-t\right)=\beta_{n}R_{n}^*R_{n-1}^*,
\ee
\be\label{re7}
\left(2r_n-\la\right)r_n^* -t r_n=\bt_n\left(R_n^*R_{n-1}+R_{n-1}^*R_n\right),
\ee
\be\label{re6}
r_n^2-\la r_n-3t\bt_n+6\bt_n r_n^*+\sum_{j=0}^{n-1}R_{j}^*=\bt_n\left(3\al_nR_{n-1}^*+3\al_{n-1}R_n^*+R_nR_{n-1}\right),
\ee
$$
6\bt_nr_n-3\la \bt_n+\sum_{j=0}^{n-1}R_{j}=3\bt_n\left(R_{n}^*+R_{n-1}^*+\al_nR_{n-1}+\al_{n-1}R_n\right),
$$
\be\label{re4}
r_{n}^*+3\bt_n^2+\sum_{j=0}^{n-1}\al_j=\bt_n\left(3\al_n\al_{n-1}+R_{n}+R_{n-1}\right).
\ee
\begin{proposition}\label{pro}
The auxiliary quantities $R_{n}^*$ and $r_{n}^*$ are expressed in terms of the recurrence coefficients as follows:
\be\label{Rne}
R_{n}^*=r_n+r_{n+1}+\al_nR_n-\la,
\ee
\be\label{rne}
r_{n}^*=\frac{\left(r_n+r_{n+1}+\al_nR_n-\la\right)\bt_nR_{n-1}+\left(r_{n-1}+r_{n}+\al_{n-1}R_{n-1}-\la\right)\bt_nR_{n}+tr_n}{2r_n-\la},
\ee
where $R_n$ and $r_n$ are given by (\ref{Rn}) and (\ref{rn}), respectively.
\end{proposition}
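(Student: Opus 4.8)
The plan is to recognize that both identities are purely algebraic rearrangements of the compatibility relations already extracted from $(S_1)$ and $(S_2')$, so no new integration or combinatorial input is needed. For the first identity \eqref{Rne}, I would simply rearrange \eqref{re3}. That relation reads $r_n+r_{n+1}=R_n^*-\al_n R_n+\la$, which, upon solving for $R_n^*$, yields exactly $R_n^*=r_n+r_{n+1}+\al_n R_n-\la$. This is immediate and requires only moving terms across the equality.

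The second identity \eqref{rne} takes a little more work but is still a direct manipulation. The key input is \eqref{re7}, namely $(2r_n-\la)r_n^*-t r_n=\bt_n\bigl(R_n^*R_{n-1}+R_{n-1}^*R_n\bigr)$. I would first isolate $r_n^*$ by dividing through by $2r_n-\la$, obtaining
\be\no
r_n^*=\frac{t r_n+\bt_n\bigl(R_n^*R_{n-1}+R_{n-1}^*R_n\bigr)}{2r_n-\la}.
\ee
The remaining step is to eliminate the starred quantities $R_n^*$ and $R_{n-1}^*$ from the numerator using the identity \eqref{Rne} just established, together with its shift $n\mapsto n-1$, which gives $R_{n-1}^*=r_{n-1}+r_n+\al_{n-1}R_{n-1}-\la$. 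Substituting these two expressions into the numerator and grouping the $\bt_n R_{n-1}$ and $\bt_n R_n$ terms reproduces \eqref{rne} verbatim.

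I do not expect any genuine obstacle in this derivation; the only point requiring a word of care is that the manipulation presupposes $2r_n-\la\neq 0$, so that the division producing \eqref{rne} is legitimate. This should hold generically, and since \eqref{re7} is an identity among the recurrence coefficients, the resulting formula for $r_n^*$ is valid wherever the denominator does not vanish. In short, \eqref{Rne} comes from $(S_1)$ via \eqref{re3}, and \eqref{rne} comes from $(S_2')$ via \eqref{re7} after back-substituting \eqref{Rne}; the whole proof is a matter of solving two linear-in-the-target relations and substituting.
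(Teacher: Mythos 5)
Your proof is correct and follows exactly the paper's own route: solving \eqref{re3} for $R_n^*$ to obtain \eqref{Rne}, then substituting that expression (and its shift $n\mapsto n-1$) into \eqref{re7} and dividing by $2r_n-\la$ to obtain \eqref{rne}. Your explicit remark that the division requires $2r_n-\la\neq 0$ is a point the paper passes over silently, but otherwise there is nothing to add.
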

\begin{proof}
From (\ref{re3}), we get (\ref{Rne}). Substituting (\ref{Rne}) into (\ref{re7}), we obtain (\ref{rne}).
\end{proof}
With these preparations, we are now ready to derive the discrete system for the recurrence coefficients.
\begin{theorem}\label{the}
The recurrence coefficients $\al_n$ and $\bt_n$ satisfy the following system of nonlinear \textbf{third}-order difference equations:
\begin{align}\label{h1}
&\left[\left(r_n+r_{n+1}+\al_nR_n-\la\right)\bt_nR_{n-1}+\left(r_{n-1}+r_{n}+\al_{n-1}R_{n-1}-\la\right)\bt_nR_{n}+tr_n\right]\no\\
&\times\left[\left(r_n+r_{n+1}+\al_nR_n-\la\right)\bt_nR_{n-1}+\left(r_{n-1}+r_{n}+\al_{n-1}R_{n-1}-\la\right)\bt_nR_{n}-tr_n+\la t\right]\no\\
&=\bt_n(2r_n-\la)^2\left(r_n+r_{n+1}+\al_nR_n-\la\right)\left(r_{n-1}+r_{n}+\al_{n-1}R_{n-1}-\la\right),
\end{align}
\begin{align}\label{h2}
&2\left(r_n+r_{n+1}+\al_nR_n-\la\right)\bt_nR_{n-1}+2\left(r_{n-1}+r_{n}+\al_{n-1}R_{n-1}-\la\right)\bt_nR_{n}+2tr_n\no\\
&+(2r_n-\la)\left[\al_n\left(r_n+r_{n+1}+\al_nR_n-\la-1\right)+3\bt_n^2-3\bt_{n+1}^2-t\right]\no\\
&=(2r_n-\la)\left[\bt_n\left(3\al_n\al_{n-1}+R_{n}+R_{n-1}\right)-\bt_{n+1}\left(3\al_{n+1}\al_{n}+R_{n+1}+R_{n}\right)\right],
\end{align}
where $R_n$ and $r_n$ are given by (\ref{Rn}) and (\ref{rn}), respectively.
\end{theorem}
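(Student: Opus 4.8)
The plan is to eliminate the auxiliary quantities $R_n^*$ and $r_n^*$ from the compatibility relations, leaving closed equations in $\al_n,\bt_n$ (equivalently in $R_n,r_n$). Proposition \ref{pro} already expresses $R_n^*$ via (\ref{Rne}) and $r_n^*$ via (\ref{rne}), so the idea is to insert these into the two relations (\ref{re2}) and (\ref{re4}) that have not yet been fully exploited. Throughout, it is convenient to isolate the single quantity $N:=(r_n+r_{n+1}+\al_nR_n-\la)\bt_nR_{n-1}+(r_{n-1}+r_n+\al_{n-1}R_{n-1}-\la)\bt_nR_n+tr_n$, which by (\ref{re7}) combined with (\ref{Rne}) satisfies $(2r_n-\la)\,r_n^*=N$; this is the substitution that will linearize all the denominators.

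For (\ref{h1}), I would start from the factorized relation (\ref{re2}), namely $r_n^*(r_n^*-t)=\bt_nR_n^*R_{n-1}^*$, and write $r_n^*=\frac{N}{2r_n-\la}$. Then $(2r_n-\la)(r_n^*-t)=N-t(2r_n-\la)=N-2tr_n+\la t$, which is precisely the second bracket appearing in (\ref{h1}). Multiplying (\ref{re2}) through by $(2r_n-\la)^2$ therefore gives $N\,(N-2tr_n+\la t)=\bt_n(2r_n-\la)^2R_n^*R_{n-1}^*$, and replacing $R_n^*,R_{n-1}^*$ on the right by their (\ref{Rne})-values reproduces (\ref{h1}) verbatim, the first bracket being $N$ itself.

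For (\ref{h2}), the relation (\ref{re4}) carries the running sum $\sum_{j=0}^{n-1}\al_j$, so the first step is to take its forward difference: subtracting (\ref{re4}) at index $n$ from the same relation at index $n+1$ eliminates the sum and yields $r_{n+1}^*-r_n^*+3\bt_{n+1}^2-3\bt_n^2+\al_n=\bt_{n+1}(3\al_{n+1}\al_n+R_{n+1}+R_n)-\bt_n(3\al_n\al_{n-1}+R_n+R_{n-1})$. Next I would use (\ref{re1}) to write $r_{n+1}^*=t-\al_nR_n^*-r_n^*$, turning the left-hand side into an expression in $r_n^*$ and $R_n^*$ alone. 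Multiplying the whole identity by $(2r_n-\la)$ and invoking $(2r_n-\la)r_n^*=N$ converts the two $r_n^*$-terms into $2N$; substituting (\ref{Rne}) for the remaining $R_n^*$ and rearranging then gives (\ref{h2}).

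The bulk of the work—and the main obstacle—is bookkeeping rather than any conceptual difficulty. In (\ref{h1}) one must clear the $(2r_n-\la)^2$ denominators correctly and match the two large brackets against $N$ and $N-2tr_n+\la t$. In (\ref{h2}) a long string of contributions (the $\al_nR_n^*$, $\pm 3\bt^2$, $\pm t$ and $\pm\al_n$ terms) must cancel in pairs so that only the two $\bt(3\al\al+R+R)$ clusters survive on the right-hand side; verifying this total cancellation is the delicate part. I would organize both computations around the substitution $N=(2r_n-\la)r_n^*$, since that is what makes the cancellations transparent and keeps the algebra manageable.
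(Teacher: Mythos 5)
Your proposal is correct and follows essentially the same route as the paper: (\ref{h1}) is obtained by substituting (\ref{Rne}) and (\ref{rne}) into (\ref{re2}) and clearing the $(2r_n-\la)^2$ denominator, and (\ref{h2}) by taking the forward difference of (\ref{re4}), eliminating $r_{n+1}^*$ via (\ref{re1}), and then substituting (\ref{Rne}) and (\ref{rne}). Your auxiliary quantity $N=(2r_n-\la)r_n^*$ is just a convenient name for the numerator of (\ref{rne}) and does not change the argument.
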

\begin{proof}

Substituting (\ref{Rne}) and (\ref{rne}) into (\ref{re2}), we obtain (\ref{h1}).
To proceed, replacing $n$ by $n+1$ in (\ref{re4}) and making a difference with (\ref{re4}) give rise to
\be\label{eq1}
r_{n+1}^*-r_{n}^*+3\bt_{n+1}^2-3\bt_n^2+\al_n=\bt_{n+1}\left(3\al_{n+1}\al_{n}+R_{n+1}+R_{n}\right)-\bt_n\left(3\al_n\al_{n-1}+R_{n}+R_{n-1}\right).
\ee
Eliminating $r_{n+1}^*$ from the combination of (\ref{re1}) and (\ref{eq1}), we have
$$
2r_n^*+\al_n(R_n^*-1)+3\bt_n^2-3\bt_{n+1}^2-t=\bt_n\left(3\al_n\al_{n-1}+R_{n}+R_{n-1}\right)-\bt_{n+1}\left(3\al_{n+1}\al_{n}+R_{n+1}+R_{n}\right).
$$
Plugging (\ref{Rne}) and (\ref{rne}) into the above, we arrive at (\ref{h2}).
\end{proof}
\begin{remark}
If one substitutes (\ref{Rne}) and (\ref{rne}) into (\ref{re1}) directly, a \textbf{fourth}-order difference equation for the recurrence coefficients would be obtained.
\end{remark}
\begin{remark}
Using (\ref{sum}), it is seen from (\ref{re4}) that the sub-leading coefficient $\mathrm{p}(n,t)$ can be expressed in terms of the recurrence coefficients $\al_n$ and $\bt_n$.
\end{remark}

At the end of this section, we show that our orthogonal polynomials satisfy a second-order linear ordinary differential equation with the coefficients expressed in terms of $\al_n$ and $\bt_n$.
\begin{theorem}
The monic orthogonal polynomials $P_n(x),\; n=0, 1, 2, \ldots,$ satisfy the following second-order differential equation:
\begin{align}\label{ode}
&P_n''(x)-\bigg(\mathrm{v}'(x)+\frac{A_{n}'(x)}{A_{n}(x)}\bigg)P_n'(x)+\bigg(B_{n}'(x)
-B_{n}^2(x)-\mathrm{v}'(x)B_{n}(x)+\beta_{n}A_{n}(x)A_{n-1}(x)\no\\[8pt]
&-\frac{A_{n}'(x)B_{n}(x)}{A_{n}(x)}\bigg)P_n(x)=0,
\end{align}
where $\mathrm{v}'(x)$ is given by (\ref{vpx}) and
\be\label{anx1}
A_n(x)=3x+3\al_{n}+\frac{R_{n}}{x}+\frac{r_n+r_{n+1}+\al_nR_n-\la}{x^2},
\ee
\be\label{bnx1}
B_{n}(x)=3\beta_{n}+\frac{r_n}{x}+\frac{\left(r_n+r_{n+1}+\al_nR_n-\la\right)\bt_nR_{n-1}+\left(r_{n-1}+r_{n}+\al_{n-1}R_{n-1}-\la\right)\bt_nR_{n}+tr_n}{(2r_n-\la)x^2}
\ee
with $R_n$ and $r_n$ given by (\ref{Rn}) and (\ref{rn}).
\end{theorem}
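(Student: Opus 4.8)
The plan is to derive the differential equation directly from the lowering and raising ladder operators (\ref{lowering}) and (\ref{raising}), following the Chen--Ismail scheme \cite{ChenIsmail2}, and then to insert the explicit forms of $A_n(x)$ and $B_n(x)$. The underlying idea is that the two first-order ladder relations together determine a second-order equation: one uses them to eliminate $P_{n-1}(x)$, leaving a single equation for $P_n(x)$ alone.

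First I would rewrite the lowering operator (\ref{lowering}) as $P_n'(x)+B_n(x)P_n(x)=\bt_nA_n(x)P_{n-1}(x)$, which expresses $\bt_nA_n(x)P_{n-1}(x)$ in terms of $P_n$ and $P_n'$. Differentiating this relation once gives
\be
P_n''+B_n'P_n+B_nP_n'=\bt_nA_n'P_{n-1}+\bt_nA_nP_{n-1}'.
\ee
Next I would remove $P_{n-1}'$ using the raising operator (\ref{raising}), which yields $P_{n-1}'=(B_n+\mathrm{v}')P_{n-1}-A_{n-1}P_n$. After this substitution the right-hand side contains only $P_{n-1}$, through the combination $\bt_n[A_n'+A_n(B_n+\mathrm{v}')]P_{n-1}$, together with the term $-\bt_nA_nA_{n-1}P_n$. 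The crucial step is then to re-substitute $\bt_nA_nP_{n-1}=P_n'+B_nP_n$ from the lowering relation, so that $\bt_nP_{n-1}$ is replaced by $(P_n'+B_nP_n)/A_n$ and $P_{n-1}$ is eliminated entirely. Collecting the resulting terms, and observing that the two $B_nP_n'$ contributions cancel, produces precisely (\ref{ode}); in particular the factor $\bt_nA_n'P_{n-1}$ is what generates the $A_n'/A_n$ coefficient of $P_n'$ and the $A_n'B_n/A_n$ term in the coefficient of $P_n$.

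Finally, to obtain the explicit coefficients (\ref{anx1}) and (\ref{bnx1}), I would take the forms of $A_n(x)$ and $B_n(x)$ from Lemma \ref{le}, namely (\ref{anz}) and (\ref{bnz}), and substitute the expressions for the auxiliary quantities $R_n^*$ and $r_n^*$ established in Proposition \ref{pro}, that is (\ref{Rne}) and (\ref{rne}). The derivation of the general form (\ref{ode}) is routine once the elimination is organized as above, so I do not expect a genuine obstacle; the only points requiring care are the division by $A_n(x)$, which is legitimate away from its zeros, and the bookkeeping in the re-substitution step, where one must track how the $\bt_nA_n'P_{n-1}$ and $\bt_nA_n(B_n+\mathrm{v}')P_{n-1}$ terms combine after dividing through by $A_n$.
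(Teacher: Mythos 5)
Your proposal is correct and follows essentially the same route as the paper: the paper's proof simply states that one eliminates $P_{n-1}(x)$ from (\ref{lowering}) and (\ref{raising}) to obtain (\ref{ode}), and then invokes Lemma \ref{le} and Proposition \ref{pro} for the explicit coefficients, which is exactly the elimination you carry out in detail. Your computation (differentiate the lowering relation, substitute the raising relation for $P_{n-1}'$, re-substitute $\beta_nA_nP_{n-1}=P_n'+B_nP_n$, and note the cancellation of the $B_nP_n'$ terms) checks out and reproduces (\ref{ode}) exactly.
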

\begin{proof}
Eliminating $P_{n-1}(x)$ from the ladder operator equations (\ref{lowering}) and (\ref{raising}), we obtain (\ref{ode}). The expressions in (\ref{anx1}) and (\ref{bnx1}) come from Lemma \ref{le} and Proposition \ref{pro}.
\end{proof}

\section{The $t$ evolution and differential-difference equations}
Recall that the recurrence coefficients, the sub-leading coefficient $\mathrm{p}(n,t)$ and the auxiliary quantities all depend on $t$. In this section, we study the evolution of these quantities in $t$. We start from taking a derivative with respect to $t$ in the orthogonality condition
$$
\int_{0}^{\infty}P_{n}(x;t)P_{n-1}(x;t)w(x;t)dx=0.
$$
It follows that
$$
\frac{d}{dt}\mathrm{p}(n,t)=\frac{1}{h_{n-1}(t)}\int_{0}^{\infty}\frac{1}{x}P_{n}(x;t)P_{n-1}(x;t)w(x;t)dx.
$$
From (\ref{rns}) we have
$$
t\frac{d}{dt}\mathrm{p}(n,t)=r_n^{*}.
$$
By making use of (\ref{be1}) and (\ref{eq1}), we find
\begin{align}
t\al_n'(t)&=r_n^{*}-r_{n+1}^{*}\no\\
&=3\bt_{n+1}^2-3\bt_n^2+\al_n+\bt_n\left(3\al_n\al_{n-1}+R_{n}+R_{n-1}\right)-\bt_{n+1}\left(3\al_{n+1}\al_{n}+R_{n+1}+R_{n}\right).\no
\end{align}
Substituting (\ref{Rn}) into the above and simplifying the result, we obtain the differential-difference equation
$$
t\al_n'(t)=\al_n+3\bt_n(\al_n^2+\al_n\al_{n-1}+\al_{n-1}^2+\bt_n+\bt_{n-1})-3\bt_{n+1}(\al_{n+1}^2+\al_{n+1}\al_{n}+\al_{n}^2+\bt_{n+2}+\bt_{n+1}).
$$

On the other hand, differentiating the equality
$$
h_n(t)=	\int_{0}^{\infty}P_{n}^2(x;t)w(x;t)dx
$$
with respect to $t$ gives rise to
$$
h_n'(t)=-\int_{0}^{\infty}\frac{1}{x}P_{n}^2(x;t)w(x;t)dx
$$
Taking account of (\ref{Rns}), we have
\be\label{dp}
t\frac{d}{dt}\ln h_n(t)=-R_n^{*}.
\ee
Using (\ref{be2}) and (\ref{Rne}), it follows that
\begin{align}
t\bt_n'(t)&=\bt_n(R_{n-1}^*-R_n^*)\no\\
&=\bt_n(r_{n-1}-r_{n+1}+\al_{n-1}R_{n-1}-\al_nR_n).\no
\end{align}
Substituting (\ref{Rn}) and (\ref{rn}) into the above produces another differential-difference equation
$$
t\bt_n'(t)=\bt_n\left[2+3\al_{n-2}\bt_{n-1}-3\al_{n+1}\bt_{n+1}+3\al_{n-1}(\al_{n-1}^2+\bt_n+2\bt_{n-1})-3\al_{n}(\al_{n}^2+\bt_n+2\bt_{n+1})\right].
$$
Hence, we obtain the following theorem.
\begin{theorem}
The recurrence coefficients $\al_n$ and $\bt_n$ satisfy the coupled differential-difference equations
$$
t\al_n'(t)=\al_n+3\bt_n(\al_n^2+\al_n\al_{n-1}+\al_{n-1}^2+\bt_n+\bt_{n-1})-3\bt_{n+1}(\al_{n+1}^2+\al_{n+1}\al_{n}+\al_{n}^2+\bt_{n+2}+\bt_{n+1}),
$$
$$
t\bt_n'(t)=\bt_n\left[2+3\al_{n-2}\bt_{n-1}-3\al_{n+1}\bt_{n+1}+3\al_{n-1}(\al_{n-1}^2+\bt_n+2\bt_{n-1})-3\al_{n}(\al_{n}^2+\bt_n+2\bt_{n+1})\right].
$$
\end{theorem}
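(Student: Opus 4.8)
The plan is to obtain both equations by differentiating in $t$ the two defining integral identities for the data — the orthogonality relation $\int_0^\infty P_n P_{n-1} w\,dx = 0$ and the normalization $h_n = \int_0^\infty P_n^2 w\,dx$ — and then converting the resulting integrals into the auxiliary quantities $r_n^*$ and $R_n^*$ introduced in Lemma \ref{le}. The structural fact that makes everything work is that the weight (\ref{weight}) satisfies $\partial_t w(x;t) = -\frac{1}{x}w(x;t)$, so each $t$-differentiation of an integral against $w$ produces exactly the factor $1/x$ appearing in the definitions (\ref{Rns}) and (\ref{rns}) of $R_n^*$ and $r_n^*$.

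First I would treat $\alpha_n$. By (\ref{be1}), $\alpha_n = \mathrm{p}(n,t) - \mathrm{p}(n+1,t)$, so it suffices to compute $\frac{d}{dt}\mathrm{p}(n,t)$. Differentiating the orthogonality relation, the terms involving $\partial_t P_n$ and $\partial_t P_{n-1}$ simplify because these are polynomials of degree $\le n-1$ and $\le n-2$ respectively (the leading coefficient of the monic $P_n$ is $t$-independent): orthogonality kills the $\partial_t P_{n-1}$ term entirely and reduces the $\partial_t P_n$ term to the contribution of the sub-leading coefficient, namely $h_{n-1}\frac{d}{dt}\mathrm{p}(n,t)$. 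The remaining term $\int_0^\infty P_n P_{n-1}\partial_t w\,dx = -\int_0^\infty \frac{1}{x}P_n P_{n-1}w\,dx$ equals $-h_{n-1}r_n^*/t$ by (\ref{rns}). This yields $t\frac{d}{dt}\mathrm{p}(n,t) = r_n^*$ and hence $t\alpha_n'(t) = r_n^* - r_{n+1}^*$. At this point I would invoke the difference relation (\ref{eq1}), already extracted from the compatibility conditions, to write $r_n^* - r_{n+1}^*$ in terms of the $\alpha$'s, $\beta$'s and the $R_j$'s, and finally substitute (\ref{Rn}) to reach the stated explicit form.

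For $\beta_n$ I would run the parallel argument starting from $h_n = \int_0^\infty P_n^2 w\,dx$. Differentiating, the cross term $\int_0^\infty P_n(\partial_t P_n)w\,dx$ vanishes by orthogonality (since $\partial_t P_n$ has degree $\le n-1$), leaving $h_n'(t) = -\int_0^\infty \frac{1}{x}P_n^2 w\,dx = -h_n R_n^*/t$ by (\ref{Rns}), i.e. $t\frac{d}{dt}\ln h_n = -R_n^*$. Since $\beta_n = h_n/h_{n-1}$ by (\ref{be2}), subtracting the same identity at index $n-1$ gives $t\beta_n'(t) = \beta_n(R_{n-1}^* - R_n^*)$. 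I would then eliminate the starred quantities through (\ref{Rne}) of Proposition \ref{pro}, which expresses $R_n^*$ via $r_n, r_{n+1}, \alpha_n, R_n$, and again substitute (\ref{Rn}) and (\ref{rn}) to obtain the claimed formula.

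I do not expect a genuine conceptual obstacle, since both derivations are short once $\partial_t w = -\frac{1}{x}w$ is exploited. The only real care is bookkeeping: after inserting (\ref{Rn}) one must correctly track the index shifts in $R_{n-1}, R_n, R_{n+1}$ and verify that the numerous $\beta_j^2$ and $\beta_j\beta_{j\pm1}$ terms produced by the substitution cancel, so that the expression collapses into the compact symmetric combinations $\alpha_j^2 + \alpha_j\alpha_{j-1} + \alpha_{j-1}^2 + \beta_j + \beta_{j-1}$ (and their shifts) in the final equations. This cancellation is the one place where a sign or index slip would be easy, so I would check it term by term.
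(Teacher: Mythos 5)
Your proposal is correct and follows essentially the same route as the paper: differentiating the orthogonality relation and the normalization $h_n$ using $\partial_t w=-\tfrac{1}{x}w$ to get $t\alpha_n'=r_n^*-r_{n+1}^*$ and $t\beta_n'=\beta_n(R_{n-1}^*-R_n^*)$, then eliminating the starred quantities via (\ref{eq1}) and (\ref{Rne}) respectively and substituting (\ref{Rn}) and (\ref{rn}). The only remaining work, as you note, is the routine algebraic bookkeeping in the final substitution.
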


Finally, we discuss the relation between the logarithmic derivative of the Hankel determinant and the recurrence coefficients.
Let $H_n(t)$ be a quantity related to the logarithmic derivative of the Hankel determinant as follows,
$$
H_n(t):=t\frac{d}{dt}\ln D_n(t).
$$
Using (\ref{hankel}) and (\ref{dp}), we have
$$
H_n(t)=\sum_{j=0}^{n-1}t\frac{d}{dt}\ln h_j(t)=-\sum_{j=0}^{n-1}R_j^*.
$$
In view of (\ref{re6}), we obtain
$$
H_n(t)=r_n^2-\la r_n-3t\bt_n+6\bt_n r_n^*-\bt_n\left(3\al_nR_{n-1}^*+3\al_{n-1}R_n^*+R_nR_{n-1}\right).
$$
Hence, $H_n(t)$ can be expressed in terms of the recurrence coefficients $\al_n$ and $\bt_n$ by using (\ref{Rne}), (\ref{rne}), (\ref{Rn}) and (\ref{rn}). Since the expression is somewhat long, we will not write it down.

\section{Large $n$ asymptotics of the recurrence coefficients}
In this section, we would like to study the asymptotics of the recurrence coefficients $\al_n$ and $\bt_n$ as $n\rightarrow\infty$ by using Dyson's Coulomb fluid approach introduced in the work of Chen and Ismail \cite{ChenIsmail}.

It is well known that Hankel determinants play an important role in random matrix theory (RMT) \cite{Deift,Forrester,Mehta}. This is because Hankel determinants compute the most fundamental objects studied in RMT. For example, the determinants may represent the partition function for a particular random matrix ensemble or they may be related to the
largest and smallest eigenvalue distribution of the ensemble. For our Hankel determinant $D_n(t)$, it can be viewed as the partition function for the unitary ensemble with the singularly perturbed Airy weight
$$
D_n(t)=\frac{1}{n!}\int_{(0,\infty)^n}\prod_{1\leq i<j\leq n}(x_i-x_j)^2\prod_{k=1}^n x_k^\lambda\mathrm{e}^{-x_k^3-\frac{t}{x_k}}dx_k,
$$
where $x_1, x_2, \ldots, x_n$, are the eigenvalues of $n\times n$ Hermitian matrices from the ensemble with the joint probability density function
$$
p(x_1, x_2, \ldots, x_n)=\frac{1}{n!\:D_n(t)}\prod_{1\leq i<j\leq n}(x_i-x_j)^2\prod_{k=1}^n x_k^\lambda\mathrm{e}^{-x_k^3-\frac{t}{x_k}}.
$$

Dyson's Coulomb fluid approach \cite{Dyson} showed that the collection of eigenvalues can be approximated by a continuous fluid with an equilibrium density $\s(x)$ for sufficiently large $n$. It can be seen that our potential $\mathrm{v}(x)$ in (\ref{vx}) satisfies the condition that $x\mathrm{v}'(x)$ increases on $\mathbb{R}^{+}$ when $\la>-1,\;t\geq 0$. In this case, the density $\s(x)$ is supported on a single interval, say $(0,b)$; see \cite[p. 199]{Saff}.

Following \cite{ChenIsmail}, the equilibrium density $\sigma(x)$ is determined by the constrained minimization problem:
$$
\min_{\s}F[\s]\qquad \mathrm{subject}\:\: \mathrm{to}\qquad \int_{0}^{b}\sigma(x)dx=n,
$$
where $F[\s]$ is the free energy defined by
$$
F[\s]:=\int_{0}^{b}\s(x)\mathrm{v}(x)dx-\int_{0}^{b}\int_{0}^{b}\s(x)\ln|x-y|\s(y)dxdy.
$$

It is then found that the density $\s(x)$ satisfies the integral equation
$$
\mathrm{v}(x)-2\int_{0}^{b}\ln|x-y|\s(y)dy=A,\qquad x\in (0,b),
$$
where $A$ is the Lagrange multiplier for the constraint.
Taking a derivative with respect to $x$ for the above equation gives the singular integral equation
\be\label{sie}
\mathrm{v}'(x)-2P\int_{0}^{b}\frac{\sigma(y)}{x-y}dy=0,\qquad x\in (0,b),
\ee
where $P$ denotes the Cauchy principal value. The solution of (\ref{sie}) is given by
$$
\sigma(x)=\frac{1}{2\pi^2}\sqrt{\frac{b-x}{x}}P\int_{0}^{b}\frac{\mathrm{v}'(y)}{y-x}\sqrt{\frac{y}{b-y}}dy.
$$
Substituting (\ref{vpx}) into the above, we obtain
$$
\s(x)=\frac{3  (8 x^2 +4 b x+3 b^2)}{16 \pi }\sqrt{\frac{b-x}{x}}.
$$
It follows that the normalization condition $\int_{0}^{b}\sigma(x)dx=n$ becomes
$$
\frac{15 b^3}{32}=n.
$$

Furthermore, it was shown in \cite{ChenIsmail} that as $n\rightarrow\infty$,
$$
\al_n\sim\frac{b}{2}, \qquad\qquad \bt_n\sim\frac{b^2}{16},
$$
where the symbol $c_n\sim d_n$ means that $\lim\limits_{n\rightarrow\infty}\frac{c_n}{d_n}=1$.
Hence, we obtain the following results.
\begin{theorem}\label{thm}
For fixed parameters $\la>-1,\;t\geq 0$,
the recurrence coefficients of the monic orthogonal polynomials with the weight (\ref{weight}) have the large $n$ asymptotics
$$
\al_n\sim\sqrt[3]{\frac{4n}{15}},\qquad\qquad \bt_n\sim\sqrt[3]{\frac{n^2}{900}}.
$$
\end{theorem}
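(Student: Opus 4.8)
The plan is to treat Theorem \ref{thm} as a direct algebraic consequence of the two ingredients already assembled just before its statement: the normalization condition $\frac{15b^3}{32}=n$, which fixes the soft edge $b$ of the equilibrium measure, together with the leading-order relations $\al_n\sim b/2$ and $\bt_n\sim b^2/16$ inherited from Chen and Ismail. First I would solve the normalization condition for the edge, obtaining
$$
b=\left(\frac{32n}{15}\right)^{1/3},
$$
which exhibits the $n^{1/3}$ growth of the support as $n\to\infty$ and reduces everything to a substitution.

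Next I would plug this expression for $b$ into the two asymptotic relations and simplify the resulting cube roots. For $\al_n$ one pulls the prefactor $1/2$ inside as $1/8$:
$$
\al_n\sim\frac{b}{2}=\frac{1}{2}\left(\frac{32n}{15}\right)^{1/3}=\left(\frac{32n}{8\cdot 15}\right)^{1/3}=\sqrt[3]{\frac{4n}{15}}.
$$
For $\bt_n$ one squares $b$ and absorbs $1/16$ across the $2/3$ power, using $32^2/16^3=1024/4096=1/4$ and $4\cdot 15^2=900$:
$$
\bt_n\sim\frac{b^2}{16}=\frac{1}{16}\left(\frac{32n}{15}\right)^{2/3}=\left(\frac{32^2\,n^2}{16^3\cdot 15^2}\right)^{1/3}=\sqrt[3]{\frac{n^2}{900}}.
$$
These two identities are precisely the claimed asymptotics, so the theorem follows.

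Since the quoted relations $\al_n\sim b/2$ and $\bt_n\sim b^2/16$ are statements that the relevant ratios tend to $1$, and $b$ is a fixed deterministic function of $n$, composing them with the algebraic substitution preserves the $\sim$ relation and no further analytic estimate is required at this stage. In effect the substantive analysis—checking that $x\mathrm{v}'(x)$ is increasing on $\mathbb{R}^{+}$ for $\la>-1,\,t\geq 0$ so that the support is a single interval $(0,b)$, solving the singular integral equation (\ref{sie}) for $\s(x)$, verifying the normalization integral, and invoking the Coulomb-fluid leading-order behavior of the recurrence coefficients—has all been carried out before the statement and is taken as given. The only point demanding care is the bookkeeping of the cube roots, specifically distributing the rational factors $1/2$ and $1/16$ correctly across the $1/3$ and $2/3$ powers; an exponent slip there would be easy to make, but the computation is otherwise routine.
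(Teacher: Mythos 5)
Your proposal is correct and follows exactly the paper's route: the paper assembles the normalization condition $\tfrac{15b^3}{32}=n$ and the Coulomb-fluid relations $\al_n\sim b/2$, $\bt_n\sim b^2/16$ immediately before the theorem and then states the result, leaving precisely the substitution $b=(32n/15)^{1/3}$ and the cube-root bookkeeping that you carry out (both of your simplifications, $32/8=4$ and $32^2/16^3=1/4$ with $4\cdot 15^2=900$, check out). No discrepancy with the paper's argument.
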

\begin{remark}
It is interesting to notice that the leading asymptotics of the recurrence coefficients are independent of the parameters $\la$ and $t$.
\end{remark}

\section{Conclusions}
In this paper, we have studied orthogonal polynomials with respect to the singularly perturbed Airy weight. We derived a pair of difference equations and differential-difference equations satisfied by the recurrence coefficients. We proved that the orthogonal polynomials satisfy a linear second-order ordinary differential equation. The relations between the sub-leading coefficient of the monic orthogonal polynomials, the associated Hankel determinant and the recurrence coefficients have also been discussed. Furthermore, we investigated the large $n$ asymptotics of the recurrence coefficients when the parameters $\la$ and $t$ are fixed.

\section*{Acknowledgments}
This work was partially supported by the National Natural Science Foundation of China under grant number 12001212, by the Fundamental Research Funds for the Central Universities under grant number ZQN-902 and by the Scientific Research Funds of Huaqiao University under grant number 17BS402.

\section*{Conflict of Interest}
The authors have no competing interests to declare that are relevant to the content of this article.
\section*{Data Availability Statements}
Data sharing not applicable to this article as no datasets were generated or analysed during the current study.

\end{document}